\documentclass[12pt,a4paper, reqno]{amsart}
\usepackage{amsmath,amsfonts,amssymb,amsthm,amscd}

\topmargin -1cm \headsep 0.5cm \textheight 23cm \textwidth 15cm

\parskip 0.2cm
\parindent 5mm
\oddsidemargin 0.5cm \evensidemargin 0.5cm \unitlength=1cm

\input{comment.sty}
\includecomment{MM}
\includecomment{CL}

\usepackage[dvipdfmx]{graphicx}
\numberwithin{equation}{section}

\newcommand{\E}{\mathbb{E}}
\newcommand{\Pb}{\mathbb{P}}
\newcommand{\R}{\mathbb{R}}
\newcommand{\Z}{\mathbb{Z}}
\newcommand{\id}{$id$}

\renewcommand{\P}{\mathcal{P}}
\newcommand{\A}{\mathcal{A}}

\newcommand{\G}{\Gamma}
\renewcommand{\d}{\delta}

\renewcommand{\i}{\iota}

\renewcommand{\l}{\lambda}
\renewcommand{\L}{\Lambda}
\newcommand{\m}{\mu}
\newcommand{\n}{\nu}

\renewcommand{\r}{\rho}
\newcommand{\s}{\sigma}

\newcommand{\e}{\varepsilon}
\newcommand{\f}{\varphi}

\newcommand{\y}{\eta}

\newtheorem{thm}{Theorem}[section]
\newtheorem{lem}{Lemma}[section]

\newtheorem{Def}{Definition}[section]

\newtheorem{rem}{Remark}[section]
\newtheorem{ex}{Example}[section]

\begin{document}

\title{A note on a local ergodic theorem for an infinite tower of coverings.}
\author{Ryokichi Tanaka}
\address{Advanced Institute for Materials Research and Mathematical Institute, Tohoku University, 2-1-1 Katahira, Aoba-ku, Sendai, 980-8577, Japan}
\email{rtanaka@wpi-aimr.tohoku.ac.jp}
\date{}

\maketitle

\begin{abstract}
This is a note on a local ergodic theorem for a symmetric exclusion process defined on an infinite tower of coverings, which is associated with a finitely generated residually finite amenable group.
\end{abstract}

\section{Introduction}

The {\it hydrodynamic limit} is one of the main frameworks to understand scaling limit of interacting particle systems in order to capture the relation between microscopic and macroscopic phenomena in statistical physics.
There have been studied a number of models such as exclusion processes (e.g., \cite{FHU} and \cite{KLO}), the stochastic Ginzburg-Landau model (e.g., \cite{GPV}), a chain of anharmonic oscillators \cite{OS}
and  stochastic energy exchange models \cite{Sa}.
See also \cite{KL} and \cite{S} for a background of this problem and its history.
While a large number of studies have been devoted to understand stochastic models on the one-dimensional or the $d$-dimensional discrete torus and their scaling limits, several attempts have been made to generalise the underlying graphs to the ones with rich geometric structure.
For example, Jara has studied a zero-range process on the Sierpinski gasket \cite{J} and obtained a nonlinear heat equation which the limit of density empirical measure satisfies.
The author gave a unified framework to discuss exclusion processes on a general class of graphs and obtained the hydrodynamic limit result for crystal lattices \cite{T}.
Recently, Sasada has given a unified and simplified approach to obtain the hydrodynamic limit for an important class of exclusion processes, so-called of {\it non-gradient} type, in general settings \cite{SK}.
Although it is tempting to study a number of stochastic models on general underlying graphs from viewpoint of physics, 
the attempts made so far depend on the models and also on the structure of graphs.
This note proposes a possible approach to unify these generalisations on crystal lattices and self-similar graphs, and gives a {\it local ergodic theorem} for exclusion processes for a step toward to obtain the scaling limit.

In group theory, there is a remarkable class of groups which have been studied in the context of dynamical systems.
Bartholdi, Grigorchuk and Nekrashevych described a number of self-similar (fractal) sets as the scaling limits of finite graphs associated with group actions \cite{BGN}.
They showed that a class of Julia sets and the Sierpinski gasket can be obtained as the scaling limits of {\it Schreier} graphs of some groups.
See also \cite{K} for another (yet related) construction of such a limit.
These examples appear as groups acting on rooted trees, and a sequence of Schreier graphs is associated with the action on tree (\cite{N} for a general background on this topic).
Here we discuss the simplest case, an {\it infinite tower of coverings} associated with those kind of groups (Section \ref{tower}).
The group we consider is {\it finitely generated, infinite, residually finite amenable} (see Section \ref{tower} for the definition).
We define an exclusion process on each finite covering graph, and take a limit as the size of graphs goes to infinity.

A limit theorem we are considering is a local ergodic theorem which describes local equilibrium states of particle systems.
The theorem is also referred as the {\it replacement lemma}, e.g., in \cite{KL} and \cite{T}, and enables us to replace a local average of number of particles by a global average after a large enough time.
There involves an ergodic theoretic argument to exchange the space average and the time average.
In the hydrodynamic limit, this theorem has a role to describe several different local equilibrium states, and to verify the derivation of a (possibly non-linear) partial differential equation by pasting together those states.
Here we formulate a local ergodic theorem by using a notion of {\it local function bundle} introduced in \cite{T}, and show it in the form of super exponential estimate (Theorem \ref{superexponential}).
The proof is based on the entropy method due to \cite{GPV} and also \cite{KOV}.
In order to obtain the scaling limit result, one has to describe a limiting space, which we will not discuss in this note.
The problem we discuss here is the first step before taking the scaling limit.
It would be interesting to complete the second step and to obtain the hydrodynamic limit.

The organisation of this note is the following.
In Section \ref{result}, we introduce the setting and examples of infinite tower of coverings, define exclusion processes and formulate a local ergodic theorem (Theorem \ref{superexponential}).
In Section \ref{proof}, we show Theorem \ref{superexponential} and also prove auxiliary results: the one-block estimate (Theorem \ref{1block}) and the two-blocks estimate (Theorem \ref{2blocks}).

\section{Notation and results}\label{result}

\subsection{Groups and associated infinite towers of coverings}\label{tower}

Let $\Gamma=\langle S\rangle$ be a finitely generated group, where $S$ is a finite symmetric set of generators, i.e., $S=S^{-1}$.
Throughout this note $\Gamma$ is an infinite group.
We assume that $\Gamma$ is {\it residually finite}, i.e.,
there is a descending sequence of finite index normal subgroups $\{\Gamma_i\}_{i=0}^\infty$ such that  
$\Gamma_{i}\lhd \Gamma$ with $[\Gamma:\Gamma_{i}] < \infty$ for each $i$, 
and $\bigcap_{i=1}^{\infty} \Gamma_{i}=\{\id\}$.
We also assume that $\Gamma$ is {\it amenable}, i.e., there exists a sequence of finite subsets $\{F_{i}\}_{i=0}^\infty$ of $\Gamma$ such that $\lim_{i \to \infty}|\partial_{S}F_{i}|/|F_{i}|=0$ where $\partial_{S}F_{i}:=F_{i}S\setminus F_{i}$.
We call such a sequence $\{F_i\}_{i=0}^\infty$ a {\it F{\o}lner sequence} and each $F_i$ a {\it F{\o}lner set}.
Henceforth we fix some finite generating set $S$ in $\G$ and denote by $|x|_{\G}$ the corresponding word norm of $x$ in $\G$, i.e., the minimum number of elements in $S$ to obtain $x$.

\begin{ex}\upshape
\begin{itemize} \ \
\item[(i)]{\it The group of integers $\Z$.}
We can take $S=\{-1, 1\}$ as a finite symmetric set of generators.
Considering the subgroups $\Gamma_{i}:=2^{i}\Z$ for positive integers $i$, one can check that the group is residually finite.
The sets $F_{i}:=[-i,i] \subset \Z$ gives a F{\o}lner sequence.
In the same way, $\Z^{d} (d \ge 1)$ is also residually finite and amenable.
\item[(ii)]{\it Grigorchuck group.}
Grigorchuck group is residually finite and a subsequence of balls forms a F{\o}lner sequence since it has subexponential volume growth. (See, e.g., Chapter 6 in \cite{CSC}.)
\item[(iii)]{\it Basilica group.}
Basilica group is realized as a finitely generated subgroup in the automorphism group of the binary tree, and this implies that the group is residually finite. The group is also amenable as proved in \cite{BV}.
\end{itemize}
\end{ex}

Let $X=(V, E)$ be a Cayley graph of $\G$ associated with $S$, i.e., 
the set of vertices $V$ is $\G$, and the set of oriented edges $E:=\{ (x, xs) \in \G \times \G \ : \ s \in S\}$.
Here we consider the Cayley graph as an oriented graph where both possibilities of orientation are included, i.e., if $e \in E$, then its reversed edge $\overline e \in E$. 
Denote the origin of $e$ by $oe$ and the terminus by $te$.
The group $\Gamma$ acts on $X$ from the left hand side freely and vertex transitively. 
The quotient graph $X_{0}:=\Gamma\backslash X$ consists of one vertex and (oriented) loop edges.
We use the graph distance $d$ in $X$, and denote by $B(x, r)$ the ball in $V$ centred at $x$ with radius $r$.
Abusing the notation, we denote the graph distance by $d$ in other graphs as well.

\begin{rem}\upshape
We can extend our results to quasi-transitive graphs, i.e., $\Gamma$ acts on $X$ with a finite number of orbits. 
If $\Gamma=\Z^{d}$, the quasi-transitive graph $X$ is called a {\it crystal lattice} (\cite{KS01} and \cite{T}).
\end{rem}

Let $\{\Gamma_i\}_{i=0}^\infty$
be a descending sequence of normal subgroups such that $\Gamma_{i}\lhd \Gamma$ and $[\Gamma:\Gamma_{i}] < \infty$ for each $i$, and $\bigcap_{i=1}^{\infty} \Gamma_{i}=\{\id\}$.
For each $i$, define the quotient finite graph $X_{i}:=\Gamma_{i}\backslash X$. 
Each $X_{i}$ is a finite graph since $\Gamma_{i}$ is a finite index subgroup of $\Gamma$.
Here $\Gamma/\Gamma_{i}$ acts on $X_{i}$ freely and vertex transitively, where $\Gamma/\Gamma_{i}$ is a finite group.
Then we have an infinite tower of coverings of finite graphs: $X_{i} \to X_{0}$ for $i=1, 2, \dots$.

Let us define a particle system on $X$.
Define the configuration space by 
$Z:=\{0,1\}^{V}$, and denote each configuration by $\eta:=\{\eta_{x}\}_{x \in V}$.
The action of $\G$ lifts on $Z$ naturally, by setting $(\s\y)_{z}:=\y_{\s^{-1}z}$ for $\s \in \G$, $\y \in Z$ and $z \in V$.
In the same way, for each quotient finite graph $X_{m}=(V_{m}, E_{m})=\G_{m}\backslash X$, we define a configuration space $Z_{m}:=\{0,1\}^{V_{m}}$.
The action of $\G/\G_{m}$ on $X_{m}$ lifts on $Z_{m}$ as above.
Here we define a {\it local function bundle}, which is defined on the product space of the state space $V$ and the configuration space $Z$.
This is used to formulate our local ergodic theorem.

\begin{Def}
A $\G$-invariant local function bundle for vertices is a function $f: V \times Z \to \R$ such that:
\begin{itemize}
\item There exists $r \ge 0$ such that for every $x \in V$, the function $f_{x}:Z \to \R$ depends only on $\{\y_{z}\}_{z \in B(x, r)}$. 
\item For every $\s \in \G, x \in V, \y \in Z$, it holds that $f(\s x, \s \y)=f(x,\y)$.
\end{itemize}
In a similar way, a $\G$-invariant local function bundle for edges is a function $f:E \times Z \to \R$ such that:
\begin{itemize}
\item There exists $r \ge 0$ such that for every $e \in E$, the function $f_{e}: Z \to \R$ depends only on $\{\y_{z}\}_{z \in B(oe, r)\cup B(te, r)}$.
\item For all $\s \in \G, e \in E, \y \in Z$, it holds that $f(\s e, \s \y)=f(e, \y)$.
\end{itemize}
\end{Def}

\begin{ex}\upshape \ \
\begin{itemize}
\item[(i)] For $x \in V$, if we define $f_{x}(\y):=\y_{x}$, then $f$ is a $\G$-invariant local function bundle for vertices.
In this case, for each $x \in V$, the function $f_{x}$ depends only on a configuration on $x$. 
The $f$ is $\G$-invariant by definition.
\item[(ii)] For $x \in V$, if we define $f_{x}(\y):=\prod_{e \in E_{x}}\y_{te}$, where $E_{x}:=\{e \in E \ : \ oe=x\}$, then $f$ is a $\G$-invariant local function bundle for vertices.
\item[(iii)] For $e \in E$, if we define $f_{e}(\y):=\y_{oe}+\y_{te}$, then $f$ is a $\G$-invariant local function bundle for edges. 
\item[(iv)] For $e \in E$, if we define $f_{e}(\y):=\prod_{e' \in E_{oe}}\y_{te'}+\prod_{e' \in E_{te}}\y_{te'}+c$, where $c>0$, then $f$ is a $\G$-invariant local function bundle for edges and satisfies $f(e, \y)\ge c$ for all $e \in E, \y \in Z$.
\end{itemize}
\end{ex}

Let $\{F_i\}_{i=1}^{\infty}$ be a F{\o}lner sequence for $\G$.
For a $\G$-invariant local function bundle for vertices $f:V \times Z \to \R$,
we define a {\it local average} associated with $\{F_{i}\}_{i=1}^{\infty}$.
For $x \in V$ and $F_{i}$, let
$$
\overline f_{x, i}:=\frac{1}{|F_{i}|}\sum_{\s \in F_{i}}f_{\s x}.
$$
Note that $\overline f_{\cdot, i}$ is again a $\G$-invariant local function bundle.

Let us introduce a local average on a F{\o}lner set controlled by its size.
For a non negative real number $K$, we define
$$
b(K):=\max \{i \ : \ F_{i} \subseteq B(o, K)\},
$$
i.e.,
$b(K)$ is the largest number $L$ such that all $F_{i}, i=1, \dots, L$ are included in the ball centred at $o$ with radius $K$.
Note that $b(K)$ is non-decreasing and goes to $\infty$ as $K$ goes to $\infty$.
We also consider a local average of the following type:
$$
\overline f_{x, b(K)}:=\frac{1}{|F_{b(K)}|}\sum_{\s \in F_{b(K)}}f_{\s x}.
$$

Fix a distinguished vertex $o \in V$.
Now $\G$ acts on $V$ transitively, so every vertex $x$ can be written in the form $x=\gamma o$ for some $\gamma \in \G$.
For every $m \ge 1$, denote by the same character $o \in V_{m}$, the image of $o$ via the covering map $X \to X_{m}$.
For $\s \in \G$, denote by $\underline \s \in \G\slash\G_{m}$ the image of $\s$ via the canonical surjection.
Since $f$ is $\G$-invariant, the $\G$-invariant local function bundles $f$ for vertices and edges induce functions on $V_{m}\times Z_{m}$, and $E_{m}\times Z_{m}$ respectively, they are $\G/\G_{m}$-invariant under the diagonal action for each $m$.
We also use the same character for these induced ones.

\subsection{Particle systems}

Assume that $Z_{m}$ and $Z$ are equipped with the prodiscrete topology, i.e., the product of discrete topology.
Denote by $\P(Z_{m})$ and by $\P(Z)$ the spaces of Borel probability measures on $Z_{m}$ and on $Z$, respectively.
We also define the $(1/2)$-Bernoulli measures $\n_{m}$ on $Z_{m}$ and $\n$ on $Z$, as the direct product of the $(1/2)$-Bernoulli measures on $\{0,1\}$.

Let us define the symmetric exclusion process on $X$. For a configuration $\y \in Z$ and an edge $e \in E$, define by $\y^{e}$ a configuration exchanging states on $oe$ and $te$, i.e.,
$$
\y^{e}_{z}:=
\begin{cases}
\y_{te} & z=oe, \\
\y_{oe} & z=te, \\
\y_{z} & \text{otherwise},
\end{cases}
$$
for $z \in V$.
Note that $\y^e=\y^{\overline{e}}$.
Furthermore, for a function $F$ on $Z$ and $e \in E$, we define $\pi_{e}F(\y):=F(\y^{e}) - F(\y)$.
We also define the corresponding ones for $\y \in Z_{m}$ and $e \in E_{m}$ in the same way.
The {\it symmetric exclusion process} on $X$ is defined in terms of a $\G$-invariant local function bundle for edges.
We call a $\G$-invariant local function bundle $c: E \times Z \to \R$ is a {\it jump rate} if it satisfies the following properties:
\begin{itemize}
\item (symmetric) $c(e, \y)=c(\overline e, \y)$ and $c(e, \y)=c(e, \y^{e})$ for all $e \in E, \y \in Z$.
\item (non-degenerate) There exists a positive constant $c_{0}$ such that $c_{0} \le c(e, \y)$ for all $e \in E, \y \in Z$.
\end{itemize}
Define the generator of a symmetric exclusion process by the operator
$L_{m}: L^{2}(Z_{m}, \n_{m}) \to L^{2}(Z_{m}, \n_{m})$,
$$
L_{m}F(\y):=\sum_{e \in E_{m}}c(e, \y)\pi_{e}F(\y),
$$
for $F \in L^{2}(Z_{m}, \n_{m})$.

We consider a family of exclusion processes on an infinite tower of coverings.
Each process on a covering graph $X_{m}$ is speeded up by some time scaling factor $t_{m}$.
Suppose that $\{t_{m}\}_{m=1}^{\infty}$ is an increasing sequence of positive numbers:
$t_{1} < t_{2} < \cdots < t_{m} \to \infty$ with the condition $\sqrt{t_{m}} \le 2 diam X_{m}$,
where $diam X_{m}$ denotes the diameter of $X_{m}$.

Consider the continuous time Markov chain generated by $L_{m}$ speeded up by $t_{m}$.
Fix an arbitrary time $T >0$ and denote by $D([0,T], Z_{m})$ the space of paths being right continuous and having left limits.
For a probability measure $\m_{m}$ on $Z_{m}$, 
we define by $\Pb_{m}$ the distribution on $D([0,T], Z_{m})$ of the continuous time Markov chain $\y_{m}(t)$ generated by $t_{m}L_{m}$ with the initial measure $\m_{m}$.

For $0 \le \r \le 1$, denote by $\n_{\r}$ the $\r$-Bernoulli measure on $Z$, that is the direct product of the $\r$-Bernoulli measures on $\{0,1\}$.
We define a {\it global average} of a $\G$-invariant local function bundle 
$f:V \times Z \to \R$ by $\langle f_{o}\rangle(\r):=\E_{\n_{\r}}[f_{o}]$
the expectation of the function $f_{o}: Z \to \R$ with respect to $\n_{\r}$.

The following estimate enables us to approximate a global average of a local function bundle by a local average of one under a suitable time-space average.
This estimate is referred as a local ergodic theorem, which we show in the super exponential estimate.

\begin{thm}\label{superexponential}
Fix $T >0$. For every $\G$-invariant local function bundles $f:V\times Z \to \R$ and every $\d >0$,
$$
\lim_{i \to \infty}\limsup_{\e \to 0}\limsup_{m \to \infty}\frac{1}{[\G:\G_{m}]}\log \Pb_{m}\left(\frac{1}{[\G:\G_{m}]}\int_{0}^{T}V_{o, m, \e, i}(\y(t))dt \ge \d \right)= -\infty,
$$
where 
$$
V_{o, m, \e, i}(\y)=\sum_{\underline \s \in \G/\G_{m}}\left|\overline f_{\underline \s o, i}(\y) - \langle f_{o}\rangle \left(\overline \y_{\underline \s o, b(\e\sqrt{t_{m}})}\right)\right|.
$$
\end{thm}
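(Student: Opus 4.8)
The plan is to run the entropy method of \cite{GPV} and \cite{KOV}: reduce the super exponential estimate to a variational bound for the Dirichlet form, and then dispose of that bound by the one-block and two-blocks estimates. Write $N=[\G:\G_m]$. A convenient feature of the present normalisation is that, since $\n_m$ is the uniform $(1/2)$-Bernoulli measure on the $2^N$ elements of $Z_m$, one has $H(\m_m|\n_m)\le N\log 2$ for \emph{any} initial law $\m_m$, so the entropy density is bounded with no extra hypothesis.

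First I would combine the exponential Chebyshev inequality with the entropy inequality to pass from the path law $\Pb_m$ to the reversible measure $\n_m$, and then apply the Feynman--Kac formula together with the Rayleigh--Ritz variational principle for the top eigenvalue of the self-adjoint operator $t_mL_m+\g V_{o,m,\e,i}$. For every $\g>0$ this yields
$$
\frac{1}{N}\log\Pb_m\!\left(\frac{1}{N}\int_0^T V_{o,m,\e,i}(\y(t))\,dt\ge\d\right)\le -\g\d+\log 2+\frac{T}{N}\sup_{g}\left\{\g\langle V_{o,m,\e,i}\,g,g\rangle_{\n_m}-t_mD_m(g)\right\},
$$
where the supremum runs over densities $g$ with $\|g\|_{L^2(\n_m)}=1$ and $D_m(g)=\tfrac12\sum_{e\in E_m}\int c(e,\y)(\pi_e g)^2\,d\n_m$ is the Dirichlet form. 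Since the left-hand side does not depend on $\g$, it suffices to prove that the iterated $\limsup$ (in $m$, then $\e$, then $i$) of the supremum term is at most zero for each fixed $\g$; letting $\g\to\infty$ then forces the left-hand side to $-\infty$.

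Next I would use the $\G/\G_m$-invariance to write $V_{o,m,\e,i}=\sum_{\underline\s}\tau_{\underline\s}v_{\e,i}$, a sum of translates of the single local term $v_{\e,i}(\y)=|\overline f_{o,i}(\y)-\langle f_o\rangle(\overline\y_{o,b(\e\sqrt{t_m})})|$, and distribute the Dirichlet form over the corresponding blocks. The variational quantity then splits into two contributions, handled by the auxiliary results. The \emph{one-block estimate} (Theorem \ref{1block}) governs the $m\to\infty$ limit: on a fixed F{\o}lner block $F_i$ the exclusion dynamics is irreducible on each hyperplane of constant particle number, and the abundant Dirichlet form (carrying the diverging factor $t_m$) forces $g$ to be essentially a function of the particle number alone, so that $\overline f_{\cdot,i}$ may be replaced by its canonical average; the equivalence of ensembles as $i\to\infty$ then identifies this with the grand-canonical value $\langle f_o\rangle$. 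The \emph{two-blocks estimate} (Theorem \ref{2blocks}) governs the $\e\to0$ limit, comparing the empirical density over the small block $F_i$ with that over the large block $F_{b(\e\sqrt{t_m})}$; here the F{\o}lner property $|\partial_SF_i|/|F_i|\to0$ is exactly what makes the boundary cost of transporting particles between nested averages negligible against the bulk, so the two densities agree in the limit.

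The hard part will be the two-blocks estimate in this general group setting. On $\Z^d$ one slides cubes rigidly by lattice translations; here the blocks are F{\o}lner sets in a possibly non-abelian $\G$, and on the \emph{finite} graph $X_m$ one must bound the averaged transport cost using only amenability and the covering structure. The enabling observation is that residual finiteness ($\bigcap_i\G_i$ trivial) makes the injectivity radius of the covering $X_m\to X_0$ diverge, so that for each fixed radius and all large $m$ every ball of that radius in $X_m$ is isomorphic to the corresponding ball in the Cayley graph $X$. Local estimates established once and for all on $X$ therefore transfer to $X_m$ uniformly in $m$, and combined with the F{\o}lner control of boundary terms they make the averaged variational functional vanish in the iterated limit. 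The standing assumption $\sqrt{t_m}\le 2\,\mathrm{diam}\,X_m$ ensures that the growing block $F_{b(\e\sqrt{t_m})}$ still embeds in $X_m$, keeping the microscopic and mesoscopic scales compatible throughout.
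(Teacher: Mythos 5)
Your proposal follows essentially the same route as the paper: exponential Chebyshev together with the $2^{[\G:\G_m]}$ (i.e.\ $\log 2$ per site) comparison with the equilibrium measure, the Feynman--Kac formula and the Rayleigh--Ritz principle for the top eigenvalue of $t_mL_m+\g V_{o,m,\e,i}$, and then the use of $\G/\G_m$-invariance to reduce the variational term to $\G/\G_m$-invariant measures with Dirichlet form $O([\G:\G_m]/t_m)$, which is handled by the one-block and two-blocks estimates (Theorems \ref{1block} and \ref{2blocks}). The paper packages this last reduction as Theorem \ref{equilibrium}, obtained by averaging the maximizing measure over the $\G/\G_m$-action and using convexity of the Dirichlet form, which is the same argument you sketch.
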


Here we remark that this theorem generalises \cite{T}[Theorem 4.1] for crystal lattices.

\section{Proof of Theorem \ref{superexponential}}\label{proof}

The super exponential estimate for $\Pb_{m}$ is reduced to $\Pb_{m}^{eq}$ which is the distribution of continuous time Markov chain generated by $t_{m}L_{m}$ with the initial measure the (1/2)-Bernoulli measure $\n_{m}$, i.e., an equilibrium measure.
Indeed, for every measurable set $A \subset D([0,T], Z_{m})$,
$\Pb_{m}(A) \le 2^{[\G:\G_{m}]}\Pb_{m}^{eq}(A)$ and the factor $2^{[\G:\G_{m}]}$ does not contribute to the super exponential estimate.
(Note that $[\G:\G_m]=|V_m|$.)
Moreover, the super exponential estimate Theorem \ref{superexponential} is reduced to the following theorem.

For $\m \in \P(Z_{m})$, define the Dirichlet form for $\f:=d\m/d\n_{m}$ by 
$$
I_{m}(\m):=-\int_{Z_{m}}\sqrt{\f}L_{m}\sqrt{\f}d\n_{m},
$$
which is also equal to $(1/2)\int_{Z_{m}}\sum_{e \in E_{m}}c(e, \y)\left(\pi_{e} \sqrt{\f}\right)^{2}d\n_{m}$.
For every $C>0$, we define the subset of $\P(Z_{m})$ by 
$$
\P_{m,C}:=\left\{ \mu \in \P(Z_{m}) \ : \ \text{$\m$ is $\G/\G_{m}$-invariant and} \ I_{m}(\m)\le C\frac{[\G:\G_{m}]}{t_{m}} \right\}.
$$

\begin{thm}\label{equilibrium}
For every $C>0$, it holds that
$$
\lim_{i \to \infty}\limsup_{\e \to 0}\limsup_{m \to \infty}\sup_{\m \in \P_{m,C}}\E_{\m}\left|\overline f_{o, i} - \langle f_{o}\rangle(\overline \y_{o, b(\e\sqrt{t_{m}})})\right|=0.
$$
\end{thm}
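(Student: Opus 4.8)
The plan is to interpolate between the local average $\overline f_{o,i}$ and the global average evaluated at the large-block density by inserting the global average evaluated at the \emph{same} small-block density $\overline\y_{o,i}:=|F_i|^{-1}\sum_{\s\in F_i}\y_{\s o}$. By the triangle inequality,
$$
\E_\m\left|\overline f_{o,i}-\langle f_o\rangle(\overline\y_{o,b(\e\sqrt{t_m})})\right|\le \E_\m\left|\overline f_{o,i}-\langle f_o\rangle(\overline\y_{o,i})\right|+\E_\m\left|\langle f_o\rangle(\overline\y_{o,i})-\langle f_o\rangle(\overline\y_{o,b(\e\sqrt{t_m})})\right|.
$$
The first term is controlled by the \emph{one-block estimate} (Theorem \ref{1block}), which does not involve $\e$, so that $\limsup_\e$ acts trivially on it. For the second term I would use that $\r\mapsto\langle f_o\rangle(\r)=\E_{\n_\r}[f_o]$ is Lipschitz on $[0,1]$: since $f_o$ depends on only finitely many coordinates and $\n_\r$ is a product measure, $\langle f_o\rangle$ is a polynomial in $\r$. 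Hence the second term is bounded by $L\,\E_\m|\overline\y_{o,i}-\overline\y_{o,b(\e\sqrt{t_m})}|$, which is exactly the content of the \emph{two-blocks estimate} (Theorem \ref{2blocks}).

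For the one-block estimate I would prove $\lim_{i\to\infty}\limsup_{m\to\infty}\sup_{\m\in\P_{m,C}}\E_\m|\overline f_{o,i}-\langle f_o\rangle(\overline\y_{o,i})|=0$. The integrand depends only on the configuration in a ball of radius of order $i$ around $o$. Using the $\G/\G_m$-invariance of $\m$ and the Følner averaging, I would project $\m$ onto this finite block; the Dirichlet-form bound $I_m(\m)\le C[\G:\G_m]/t_m$ then yields, after dividing by the number of translates, a per-block Dirichlet form of order $1/t_m\to0$. Thus in the limit $m\to\infty$ the relevant marginals concentrate on measures of vanishing block Dirichlet form, i.e.\ measures invariant under the exclusion dynamics restricted to the block. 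Decomposing such a measure over the conserved number of particles in the block exhibits it as a mixture of uniform (canonical) measures at prescribed particle number; on each canonical measure $\overline f_{o,i}$ equals, up to an $o_i(1)$ error by the equivalence of ensembles, the grand-canonical value $\langle f_o\rangle(\r)$ at the density $\r=\overline\y_{o,i}$. Letting $i\to\infty$ removes the error.

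For the two-blocks estimate I would show
$$
\lim_{i\to\infty}\limsup_{\e\to0}\limsup_{m\to\infty}\sup_{\m\in\P_{m,C}}\E_\m\left|\overline\y_{o,i}-\overline\y_{o,b(\e\sqrt{t_m})}\right|=0.
$$
Since $b(\e\sqrt{t_m})\to\infty$ as $m\to\infty$, the large block $F_{b(\e\sqrt{t_m})}$ can be approximated, up to a boundary of relatively negligible size by amenability ($|\partial_S F_i|/|F_i|\to0$), as a disjoint union of $\G$-translates of the small block $F_i$, so that the large-block density is essentially an average of small-block densities over these translates. Comparing the small-block density at $o$ with this average reduces to equalizing densities between two nearby translated blocks, whose cost is bounded by the Dirichlet form along a path of edges joining them. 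The non-degeneracy $c\ge c_0$ of the jump rate together with the bound on $I_m(\m)$ forces this cost to vanish in the iterated limit in $m$, then $\e$, then $i$.

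Combining the two estimates with the Lipschitz bound on $\langle f_o\rangle$ then yields Theorem \ref{equilibrium}. The main obstacle is the ergodic input to the one-block estimate. In the classical $\Z^d$ setting one invokes translation invariance and a spectral gap for symmetric exclusion on a box; here the underlying graph is a general quasi-transitive Cayley graph and the blocks are Følner sets of $\G$. One must therefore verify that the exclusion dynamics restricted to a Følner block is ergodic on each fixed-particle-number hyperplane and that the equivalence of ensembles holds along the Følner sequence, with boundary contributions controlled by the Følner property. Establishing this uniform ergodicity and equivalence of ensembles in the group-theoretic setting, rather than the combinatorial bookkeeping, is where the real work lies.
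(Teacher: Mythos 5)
Your top-level decomposition is exactly the paper's: insert $\langle f_o\rangle(\overline\y_{o,i})$, control the first term by the one-block estimate, and reduce the second term via uniform continuity of the polynomial $\langle f_o\rangle$ to comparing $\overline\y_{o,i}$ with $\overline\y_{o,b(\e\sqrt{t_m})}$; your F{\o}lner-boundary reduction of the large block to an average of translated small blocks is likewise the paper's route (its Theorem \ref{2blocks} is stated for two translated small blocks $\overline\y_{o,i}$ versus $\overline\y_{\s o,i}$ with $L<|\s|\le\e\sqrt{t_m}$, and the large-block-to-small-blocks step is carried out inside the proof of Theorem \ref{equilibrium} itself, so you have merely folded that step into your version of the two-blocks statement). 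Where you genuinely diverge is the one-block estimate. You propose the classical route via canonical measures and equivalence of ensembles, and you flag ``uniform ergodicity and equivalence of ensembles in the group-theoretic setting'' as the real difficulty. The paper sidesteps this entirely: it uses compactness of $\P(Z)$ in the weak topology, shows that any limit point $\m$ of measures in $\P_{m,C}$ has vanishing restricted Dirichlet form $I^{\circ}_{\L}(\m)=0$ on every ball $\L=B(o,K)$ (because $I^{\circ}_{\L}(\m)\le |B_{\G}(K)|\,I_m(\m)/(c_0[\G:\G_m])\le C_K/t_m\to 0$), deduces that $\{\y_x\}_{x\in V}$ are exchangeable under $\m$, and invokes de Finetti to write $\m=\int_0^1\n_\r\,\l(d\r)$; the conclusion then follows from the $L^2$ law of large numbers $\E_{\n_\r}|\overline f_{o,i}-\langle f_o\rangle(\r)|^2\le C(f)C_L/|F_i|$ along the F{\o}lner sequence. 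This soft compactness/de Finetti argument requires no spectral gap and no quantitative equivalence of ensembles, which is precisely what makes the result accessible for general residually finite amenable groups; your proposed route could presumably be made to work but would demand the uniform estimates you correctly identify as delicate, and they are not needed. The same de Finetti mechanism (on $Z\times Z$, with the extra exchange generator $L^{\circ}_{o,o}$ controlled by the path lemma) is what drives the paper's two-blocks estimate, rather than a direct Dirichlet-path cost comparison between blocks.
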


First, we see how to deduce Theorem \ref{superexponential} from Theorem \ref{equilibrium}.

\begin{proof}[Proof of Theorem \ref{superexponential}]
As we observe, it suffices to prove the required estimate for $\Pb_{m}^{eq}$.
Recall that
$V_{o, m, \e, i}(\y)=\sum_{\underline \s \in \G/\G_{m}}\left|\overline f_{\underline \s o, i}(\y) - \langle f_{o}\rangle \left(\overline \y_{\underline \s o, b(\e\sqrt{t_{m}})}\right)\right|$.

By the Chebychev inequality, for all $a >0$ and for all $\d >0$,
$$
\Pb_{m}^{eq}\left(\frac{1}{[\G:\G_{m}]}\int_{0}^{T}V_{o, m, \e, i}(\y(t))dt \ge \d \right)
\le \E_{m}^{eq} \exp\left(a\int_{0}^{T}V_{o, m, \e, i}(\y(t))dt -a \d [\G:\G_{m}] \right).
$$

For all $a >0$, we consider the operator 
$$
t_{m}L_{m} + a V_{o, m, \e, i} : L^{2}(Z_{m}, \n_{m}) \to L^{2}(Z_{m}, \n_{m}),
$$
which is self-adjoint for all $a>0$ by the definition of $L_{m}$.
Denote by $\l_{o, m, \e, i}(a)$ the largest eigenvalue of $t_{m}L_{m} + aV_{o, m, \e, i}$.
By the Feynmann-Kac formula (e.g., \cite{KL}[Lemma 7.2, Appendix 1]),
$$
\E_{m}^{eq}\exp\left(a\int_{0}^{T}V_{o, m, \e, i}dt\right) \le \exp T\l_{o, m, \e, i}(a).
$$
Therefore it suffices to show that for all $a >0$,
\begin{equation}\label{eigenvalue}
\lim_{i \to \infty}\limsup_{\e \to 0}\limsup_{m \to \infty}\frac{1}{[\G:\G_{m}]}\l_{o, m, \e, i}(a)=0. 
\end{equation}
Indeed, by using (\ref{eigenvalue}), we have that
$$
\lim_{i \to \infty}\limsup_{\e \to 0}\limsup_{m \to \infty}\frac{1}{[\G:\G_{m}]}\log \Pb_{m}^{eq}\left(\frac{1}{[\G:\G_{m}]}\int_{0}^{T}V_{o, m, \e, i}(\y(t))dt \ge \d \right)\le -a\d.
$$
Taking $a \to \infty$, we obtain Theorem \ref{superexponential}.

It remains to prove (\ref{eigenvalue}).
By the variational principle,
the largest eigenvalue $\l_{o, m, \e, i}(a)$ can be expressed in the following form:
$$
\l_{o, m, \e, i}(a):=\sup_{\m \in \P(Z_{m})}\left\{a \int_{Z_{m}}V_{o, m, \e, i} d\m - t_{m}I_{m}(\m)\right\}.
$$
It is enough to consider only the case when 
$a \int_{Z_{m}}V_{o, m, \e, i} d\m \ge t_{m}I_{m}(\m)$ for some $\m \in \P(Z_m)$.
For $\m \in \P(Z_{m})$, we deduce by $\overline \m$ the average of $\m$ by the $\G/\G_{m}$-action, that is,
$$
\overline \m:=\frac{1}{[\G:\G_{m}]}\sum_{\underline \s \in \G/\G_{m}}\m\circ \underline \s.
$$
Here $\overline \m$ is a $\G/\G_{m}$-invariant probability measure.
Now we have that
$$
\frac{1}{[\G:\G_{m}]}\int_{Z_{m}}V_{o, m, \e, i}d\m =\E_{\overline \m}\left|\overline f_{o, i} - \langle f_{o}\rangle (\overline \y_{o, b(\e\sqrt{t_{m}})})\right|.
$$
There exists a constant $C(f)>0$ depending only on $f$ such that
$$
V_{o, m, \e, i} \le C(f)[\G:\G_{m}].
$$
By the convexity of $I_{m}$,
$$
I_{m}(\overline \m)\le \frac{1}{[\G:\G_{m}]}\sum_{\underline \s \in \G/\G_{m}}I_{m}(\m \circ \underline \s),
$$
and since $I_m$ is $\G/\G_m$-invariant, we have that
$$
I_{m}(\overline \m)\le aC(f)\frac{[\G:\G_{m}]}{t_{m}}.
$$
When we denote by $\P_{m, aC(f)}$ the set of probability measures $\m$ which is $\G/\G_{m}$-invariant with $I_{m}(\m) \le aC(f)[\G:\G_{m}]/t_{m}$, then
$$
\frac{1}{[\G:\G_{m}]}\l_{o, m, \e, i}(a) \le a \sup_{\m \in \P_{m, aC(f)}}\E_{\m}\left|\overline f_{o, i} - \langle f_{o}\rangle (\overline \y_{o, b(\e\sqrt{t_{m}})})\right|.
$$
Hence (\ref{eigenvalue}) follows from Theorem \ref{equilibrium}.
\end{proof}

Henceforth we often identify a probability measure on $Z_{m}$ with the one on $Z$ by the periodic extension:
Let $\pi_{m}:V \to V_{m}$ be the covering map induced by the $\G$-action.
Define a periodic inclusion 
$\i_{m}: Z_{m} \to Z$ by $(\i_{m}\y)_{z}:=\y_{\pi_{m}(z)}$, $\y \in Z_{m}, z \in V$.
We identify $\m$ on $Z_{m}$ with its push forward by $\i_{m}$, which is a periodic extension of $\m$ on $Z$.
Conversely,
we identify a $\G_{m}$-invariant probability measure on $Z$ with the one on $Z_{m}$ in a natural way.

Theorem \ref{equilibrium} follows the one-block estimate (Theorem \ref{1block}) and the two-blocks estimate (Theorem \ref{2blocks}).
First, we prove the one-block estimate.

\begin{thm}[The one-block estimate]\label{1block}
For every $\G$-invariant local function bundle $f:V\times Z \to \R$ and for every $C>0$, it holds that
$$
\lim_{i \to \infty}\limsup_{m \to \infty}\sup_{\m \in \P_{m,C}}\E_{\m}\left|\overline f_{o, i} - \langle f_{o}\rangle (\overline \y_{o,i})\right|=0.
$$
\end{thm}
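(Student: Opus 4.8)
The plan is to take the infinite-volume limit $m\to\infty$ first for fixed $i$, identify the limiting measures, and only then let the block grow via $i\to\infty$. The starting observation is that the integrand $\y\mapsto\bigl|\overline f_{o,i}(\y)-\langle f_o\rangle(\overline\y_{o,i}(\y))\bigr|$ depends on only finitely many coordinates, namely those in $\bigcup_{\s\in F_i}B(\s o,r)$ and in $F_i o$, and since $\r\mapsto\langle f_o\rangle(\r)$ is a polynomial, this integrand is a bounded continuous function on the compact space $Z$ that does not depend on $m$. So I would fix $i$ and, for each $m$, pick $\m_m\in\P_{m,C}$ nearly attaining the supremum; identifying $\m_m$ with its periodic extension $\i_{m*}\m_m\in\P(Z)$ and using weak-$*$ compactness, I extract a subsequence converging weakly to some $\m_\infty\in\P(Z)$. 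Because $\bigcap_i\G_i=\{\id\}$, for $m$ large the finite region supporting the integrand embeds isometrically into $X_m$, so the $Z_m$-expectation equals the $Z$-expectation of the fixed continuous integrand, and by weak continuity $\limsup_{m\to\infty}\sup_{\m\in\P_{m,C}}\E_{\m}|\cdots|=\E_{\m_\infty}|\cdots|$.

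The crux is to show that any such limit $\m_\infty$ is a mixture of Bernoulli measures. Two properties survive the limit. First, $\G$-invariance: each $\m_m$ is $\G/\G_m$-invariant and $\bigcap_i\G_i=\{\id\}$, so $\m_\infty$ is invariant under the full $\G$-action. Second, and decisively, the Dirichlet form collapses: by $\G/\G_m$-invariance all $[\G:\G_m]$ translates of a fixed edge $e$ contribute equally to $I_m(\m_m)$, whence the single-edge term obeys $\int c(e,\y)(\pi_e\sqrt{\f_m})^2\,d\n_m\le 2I_m(\m_m)/[\G:\G_m]\le 2C/t_m\to 0$. Invoking lower semicontinuity of the single-edge Dirichlet functional under weak convergence, the limit satisfies $\int c(e,\y)(\pi_e\sqrt{\f_\infty})^2\,d\n=0$ for every edge $e$, where $\f_\infty=d\m_\infty/d\n$. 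By non-degeneracy $c\ge c_0>0$ this forces $\f_\infty(\y^e)=\f_\infty(\y)$ for all $e$, i.e.\ $\m_\infty$ is invariant under every edge exchange. Since $X$ is connected, adjacent transpositions generate all finite permutations of $V$, so $\m_\infty$ is exchangeable, and De Finetti's theorem yields $\m_\infty=\int_0^1\n_\r\,d\lambda(\r)$ for some $\lambda\in\P([0,1])$.

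With the limit identified, the conclusion becomes a quantitative equivalence-of-ensembles estimate, uniform in the density. Under $\n_\r$ one has $\E_{\n_\r}[\overline f_{o,i}]=\langle f_o\rangle(\r)$ by $\G$-invariance of $f$ and of $\n_\r$, while the finite range of $f$ gives $\mathrm{Var}_{\n_\r}(\overline f_{o,i})\le |B(o,2r)|\,\|f\|_\infty^2/|F_i|$ uniformly in $\r$; likewise $\E_{\n_\r}|\overline\y_{o,i}-\r|\le 1/(2\sqrt{|F_i|})$ and $\langle f_o\rangle$ is Lipschitz on $[0,1]$. Hence $\E_{\n_\r}\bigl|\overline f_{o,i}-\langle f_o\rangle(\overline\y_{o,i})\bigr|\le C'/\sqrt{|F_i|}$ uniformly in $\r$, and integrating against $\lambda$ gives $\E_{\m_\infty}|\cdots|\le C'/\sqrt{|F_i|}$. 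Since $\G$ is infinite and amenable the Følner sets satisfy $|F_i|\to\infty$, so letting $i\to\infty$ closes the argument.

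I expect the main obstacle to lie in the second paragraph: passing the Dirichlet-form bound to the infinite-volume limit requires carefully reconciling the periodic-extension identification of $\m_m\in\P(Z_m)$ with a measure on $Z$, matching the single-edge Dirichlet functional on $Z_m$ with that on $Z$ for large $m$ (using that the local marginals of $\n_m$ and $\n$ agree), and correctly invoking lower semicontinuity of this functional, which involves the square root of a Radon--Nikodym derivative, an object that is itself not continuous under weak convergence. Once zero Dirichlet form is established, the De Finetti identification and the variance estimates are comparatively routine.
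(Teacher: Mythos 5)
Your overall route is the same as the paper's: bound the Dirichlet form of measures in $\P_{m,C}$ per unit volume by $C/t_m$ using $\G/\G_m$-invariance, pass to weak limit points in the compact space $\P(Z)$, conclude that limit points are exchangeable and hence (by de Finetti) mixtures $\int_0^1\n_\r\,\l(d\r)$, and finish with a law of large numbers for $\overline f_{o,i}$ and $\overline\y_{o,i}$ under $\n_\r$, uniform in $\r$. The final quantitative step also matches the paper's estimate (\ref{eqst}) together with the uniform continuity of the polynomial $\langle f_o\rangle$.

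The one step that does not work as written is precisely the one you flag as the expected obstacle: you invoke ``lower semicontinuity of the single-edge Dirichlet functional under weak convergence'' for the functional $\m\mapsto\int c(e,\y)(\pi_e\sqrt{d\m/d\n})^2\,d\n$ on the infinite-volume space. This is not merely delicate; the object $\f_\infty=d\m_\infty/d\n$ need not exist at all. Limit points of $\P_{m,C}$ can be singular with respect to $\n$ --- indeed the conclusion of the theorem forces you to allow limits such as $\n_0$, $\n_1$, or any nontrivial mixture $\int\n_\r\,\l(d\r)$, all of which are mutually singular with $\n=\n_{1/2}$. The paper's proof avoids this by never forming an infinite-volume density: it fixes a finite box $\L=(B(o,K),E_\L)$, works with the restricted measure $\m|_\L$ (whose density $\f_\L$ with respect to the uniform measure $\n_\L$ on the finite set $Z_\L$ always exists), and uses convexity of $u\mapsto(\pi_e\sqrt{u})^2$ under the conditional averaging that defines $\m|_\L$ to get $I_\L^\circ(\m)\le\frac12\sum_{e\in E_\L}\int_{Z_m}(\pi_e\sqrt\f)^2\,d\n_m\le\frac{|B_\G(K)|}{c_0[\G:\G_m]}I_m(\m)\to0$. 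On the finite box, $I_\L^\circ$ is a continuous function of the finitely many numbers $\m|_\L(\y)$, which depend continuously on $\m$ in the weak topology, so $I_\L^\circ=0$ passes to every limit point with no semicontinuity argument needed; exchangeability then follows by letting $K$ vary. If you replace your second paragraph by this finite-box reduction, your argument coincides with the paper's and is complete.
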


We discuss a restricted region in $X$ and define the corresponding Dirichlet form.
Let $\L=(V_{\L}, E_{\L})$  be a subgraph of $X$.
Define the restricted configuration space by
$Z_{\L}:=\{0, 1\}^{V_{\L}}$,
and the $(1/2)$-Bernoulli measure $\n_{\L}$ on $Z_{\L}$
by the direct product of the $(1/2)$-Bernoulli measures on $\{0,1\}$.

In our setting, 
$\G$ acts on $X$ vertex transitively, thus $o \in V$ is a fundamental domain in $V$.
We can choose a fundamental domain $E^{0}$ in $E$ as the set of those edges; $oe$ or $te=o$.
We use the same notation $E^{0}$ for the image of $E^{0}$ on $X_{m}$ via the covering map.
Define the operator on $L^{2}(Z_{\L},\n_{\L})$ by
$$
L_{\L}^{\circ}:=\frac{1}{2}\sum_{e \in E_{\L}}\pi_{e}.
$$
For $\m \in \P(Z)$, we denote by $\m|_{\L}$ the restriction of $\m$ on $Z_{\L}$, that is, defined by taking the average outside of $\L$.
Set $\f_{\L}:=d\m|_{\L}/d\n_{\L}$ the density of $\m|_{\L}$.
The corresponding Dirichlet form of $\sqrt{\f_{\L}}$ is 
$$
I_{\L}^{\circ}(\m):=-\int_{Z_{\L}}\sqrt{\f_{\L}}L_{\L}^{\circ}\sqrt{\f_{\L}}d\n_{\L}.
$$

\begin{proof}[Proof of Theorem \ref{1block}]
Define a subgraph $\L$ of $X$ by setting 
$V_{\L}:=B(o, K)$ and $E_{\L}:=\{e \in E \ : \ oe, te \in V_{\L}\}$.
For every $\m \in \P_{m,C}$, by the convexity of the Dirichlet form and by the $(\G/\G_{m})$-invariance of $\m$ and $\n_{m}$, we have
\begin{align*}
I_{\L}^{\circ}(\m)	&=\frac{1}{2}\sum_{e \in E_{\L}}\int_{Z_{\L}}(\pi_{e}\sqrt{\f_{\L}})^{2}d\n_{\L} \\
				&\le \frac{1}{2}\sum_{e \in E_{\L}}\int_{Z_{m}}(\pi_{e}\sqrt{\f})^{2}d\n_{m} \\
				&\le \frac{1}{2}\sum_{\underline \s \in \G/\G_{m}, |\s|_{\G} \le K}\sum_{e \in \underline \s E^{0}}\int_{Z_{m}}(\pi_{e}\sqrt{\f})^{2}d\n_{m} \\
				&= \frac{1}{2}|B_{\G}(K)|\sum_{e \in E^{0}}\int_{Z_{m}}(\pi_{e}\sqrt{\f})^{2}d\n_{m},
\end{align*}
where in the last term $B_{\G}(K)$ denotes the ball of radius $K$ in $\G$ about $id$ in $\G$ by the word norm $|\cdot|_{\G}$.
On the other hand in the same way,
we have that
\begin{equation}\label{eq}
I_{m}(\m)=[\G:\G_{m}](1/2)\sum_{e \in E^{0}}\int_{Z_{m}}c(e, \y)(\pi_{e}\sqrt{\f})^{2}d\n_{m}.
\end{equation}
By the uniformly boundedness of $c(e,\y)\ge c_{0} >0$ (the non-degeneracy of $c(\cdot, \cdot)$), it holds that
$$
I_{\L}^{\circ}(\m)\le \frac{|B_{\G}(K)|}{c_{0}[\G:\G_{m}]}I_{m}(\m).
$$
Since $\m$ satisfies $I_{m}(\m)\le C[\G:\G_m]/t_{m}$,
we have that $I_{\L}^{\circ}(\m)\le C_{K}/ t_{m} \to 0$ as $m \to \infty$, where $C_{K}$ is a constant depending only on $K$.
The space of probability measures $\P(Z)$ is compact with the weak topology.
Thus, every sequence $\{\m_{i}\}_{i=1}^{\infty}$ in $\P(Z)$ has a convergence subsequence.
Let $\A \subset \P(Z)$ be the set of all limit points of $\{\m_{i}\}_{i=1}^{\infty}$ in $\P(Z)$. 
By the above argument, we have that $I_{\L}^{\circ}(\m)=0$ for every $\m \in \A$.
We obtain that $\m|_{\L}(\y^{e})=\m|_{\L}(\y)$ for all $e \in E_{\L}$ and all $\y \in Z_{\L}$ since 
$$
I_{\L}^{\circ}(\m)=(1/2)\sum_{e \in E_{\L}}\sum_{\y \in Z_{\L}}(\pi_{e}\sqrt{\m|_{\L}(\y)})^{2}=0.
$$
This holds for every $\L$ with radius $K$ and implies that random variables $\{\y_{x}\}_{x \in V}$ are exchangeable under $\m$.
By the de Finetti theorem, there exists a probability measure $\l$ on $[0,1]$ such that
$\m = \int_{0}^{1}\n_{\r}\l(d\r)$,
where $\n_{\r}$ is the $\r$-Bernoulli measure on $Z$.
Then we have 
\begin{align*}
\limsup_{m \to \infty}\sup_{\m \in \P_{m,C}}\E_{\m}\left|\overline f_{o,i}-\langle f_{o}\rangle(\overline \y_{o,i})\right|
&\le \sup_{\m \in \A}\E_{\m}\left|\overline f_{o,i}- \langle f_{o} \rangle(\overline \y_{o,i})\right| \\
&\le \sup_{\r \in [0,1]}\E_{\n_{\r}}\left|\overline f_{o, i}- \langle f_{o}\rangle(\overline \y_{o, i})\right|.
\end{align*}
Therefore, it is enough to show that 
$\lim_{i \to \infty}\sup_{\r \in [0,1]}\E_{\n_{\r}}\left|\overline f_{o, i} - \langle f_{o}\rangle(\overline \y_{o, i})\right|=0$.
Since $f$ is a $\G$-invariant local function bundle, there exists $L \ge 0$ such that 
$f_{o}:Z \to \R$ depends only on $\{\y_{z} \ : \ d(o,z) \le L\}$ and there exists a constant $C(f)>0$ depending only on $f$ such that
\begin{equation}\label{eqst}
\E_{\n_{\r}}\left|\overline f_{o,i} - \E_{\n_{\r}}[\overline f_{o,i}]\right|^{2} \le C(f)C_{L}/|F_{i}| \to 0
\end{equation}
as $i \to \infty$, where $C_{L}$ is a constant depending only on $L$.
Here we have that $\langle f_{o} \rangle(\r)=\E_{\n_{\r}}[\overline f_{o,i}]$ since $f$ is $\G$-invariant.
Since $\langle f_{o} \rangle(\r)$ is a polynomial with respect to $\r$, in particular, uniformly continuous on $[0,1]$,
it holds that
$$
\sup_{\r \in [0,1]}\E_{\n_{\r}}\left|\langle f_{o}\rangle(\overline \y_{o,i})- \langle f_{o}\rangle(\r)\right| \to 0, i\to \infty.
$$
By the triangular inequality,
\begin{align*}
&\sup_{\r \in [0,1]}\E_{\n_{\r}}\left|\overline f_{o,i}-\langle f_{o} \rangle (\overline \y_{o,i})\right| \\
&\le \sup_{\r \in [0,1]}\E_{\n_{\r}}\left|\overline f_{o,i} - \langle f_{o}\rangle(\r)\right|+
\sup_{\r \in [0,1]}\E_{\n_{\r}}\left|\langle f_{o}\rangle(\r)-\langle f_{o}\rangle(\overline \y_{o,i})\right| \to 0, i \to \infty.
\end{align*}
This concludes that $\lim_{i \to \infty}\limsup_{m \to \infty}\sup_{\m \in \P_{m,C}}\E_{\m}\left|\overline f_{o,i} - \langle f_{o}\rangle (\overline \y_{o,i})\right|=0$.
\end{proof}

Next, we prove the two-blocks estimate.

\begin{thm}[The two-blocks estimate]\label{2blocks}

For every $C >0$, it holds that
$$
\lim_{i \to \infty}\limsup_{\e \to 0}\limsup_{L \to \infty}\limsup_{m \to \infty}\sup_{\s \in \G \text{s.t.} \\ L < |\s| \le \e \sqrt{t_{m}}}\sup_{\m \in \P_{m,C}}\E_{\m}\left|\overline \y_{o, i} - \overline \y_{\s o, i}\right|=0.
$$
\end{thm}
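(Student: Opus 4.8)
The plan is to run the entropy/GPV scheme exactly as in the one-block estimate, passing to limit points of $\m\in\P_{m,C}$ and extracting exchangeability from a vanishing Dirichlet form; the genuinely new ingredient is a family of \emph{bridges} joining the two blocks whose Dirichlet cost is negligible precisely because the separation is at most $\e\sqrt{t_m}$ while the process is speeded up by $t_m$. A preliminary observation I would record first: combining (\ref{eq}) with $I_m(\m)\le C[\G:\G_m]/t_m$, the non-degeneracy $c\ge c_0$, and the $\G/\G_m$-invariance of $\m$, \emph{every} edge $e$ of $X_m$ satisfies $\int_{Z_m}(\pi_e\sqrt{\f})^2 d\n_m\le 2C/(c_0 t_m)$, since each such $e$ is a $\G/\G_m$-translate of an edge in $E^0$ and all the summands in (\ref{eq}) are nonnegative.

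Next I would reduce the $\sup_\s\sup_\m$ to a statement about limit points on a \emph{fixed} finite space. Fix $i$ and set $r_i:=\max_{\g\in F_i}|\g|_\G$, so $F_i\subseteq\hat B:=B(o,r_i)$; for $L>2r_i$ and $L<|\s|\le\e\sqrt{t_m}$ the sets $F_i$ and $F_i\s$ are disjoint. Pushing $\m$ forward under $\y\mapsto\big((\y_z)_{z\in\hat B},(\y_{\g\s})_{\g\in F_i}\big)$ gives a measure $\th_{\m,\s}$ on $\{0,1\}^{\hat B}\times\{0,1\}^{F_i}$ for which $\E_\m|\overline\y_{o,i}-\overline\y_{\s o,i}|=\E_{\th_{\m,\s}}|\overline\y_{o,i}-\overline\y_{\s o,i}|$, the two block averages now being bounded continuous functions on a compact space. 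Extracting, along $m\to\infty$ with near-maximising $\s_m,\m_m$, a weak limit point $\th_\e$, the first marginal is handled precisely as in Theorem \ref{1block}: the restricted form obeys $I^\circ_{\hat B}(\m)\le C_{r_i}/t_m\to0$, so $\th_\e$ is invariant under every edge-swap inside the connected graph $\hat B$ and hence exchangeable on $\hat B$.

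The heart of the argument is transferring exchangeability to the second block. For each $\g\in F_i$ I would join $\g$ to $\g\s$ by the left-$\g$-translate of a geodesic from $o$ to $\s o$, a path of length $|\s|\le\e\sqrt{t_m}$; the transposition of its endpoints is a product of at most $2|\s|$ nearest-neighbour swaps, so by Cauchy--Schwarz and the per-edge bound its Dirichlet cost is at most $8C|\s|^2/(c_0 t_m)\le 8C\e^2/c_0$. Consequently, in $\th_\e$ the swap $(\g\ \ \g\s)$ alters the law by $O(\e)$ in total variation. Since the transpositions inside $\hat B$ together with the matching $\{(\g\ \ \g\s):\g\in F_i\}$ act on a \emph{connected} graph on $\hat B\sqcup F_i\s$, they generate the full symmetric group, and therefore $\th_\e$ is exchangeable on $\hat B\sqcup F_i\s$ up to a total-variation error $o_\e(1)$.

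To finish, I would use that for a finitely exchangeable family of $\{0,1\}$-variables $\E(\overline\y_{o,i}-\overline\y_{\s o,i})^2=2(v-c)/|F_i|\le 1/|F_i|$, where $v$ and $c$ are the common variance and pair-covariance and $v-c=\tfrac12\E[(\y_a-\y_b)^2]\le\tfrac12$; carrying the exchangeability error through yields $\E_{\th_\e}|\overline\y_{o,i}-\overline\y_{\s o,i}|\le |F_i|^{-1/2}+o_\e(1)$. Taking $\limsup_L$ (to enforce disjointness) and $\limsup_\e$ kills the error, and since $|F_i|\to\infty$ for a F{\o}lner sequence of an infinite group, letting $i\to\infty$ gives the theorem. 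I expect the \textbf{main obstacle} to be the exchangeability of the second block: because $\G$ need not be abelian, $F_i\s$ is a \emph{right} translate of $F_i$, in general neither a metric ball about $\s o$ nor even connected in $X$, so its internal exchangeability cannot be read off directly. The device that avoids using the geometry of $F_i\s$ is to import exchangeability from the first block through the $|F_i|$ parallel bridges, whose cost remains controlled exactly because the separation is $\le\e\sqrt{t_m}$ while each edge costs $O(1/t_m)$.
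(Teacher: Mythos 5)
Your proof is correct, and its engine is the same one that drives the paper's argument: the per-edge Dirichlet bound of order $1/t_{m}$ obtained from (\ref{eq}), $I_{m}(\m)\le C[\G:\G_{m}]/t_{m}$ and $\G/\G_{m}$-invariance, combined with a moving-particle estimate along a geodesic (this is exactly Lemma \ref{pathlemma}), so that swapping two sites at distance at most $\e\sqrt{t_{m}}$ costs $O(\e^{2})$; plus the one-block mechanism for exchangeability inside a ball. The packaging, however, is genuinely different. The paper doubles the configuration space via $\hat\s(\y)=(\y,\s^{-1}\y)$, introduces two Dirichlet forms on $Z\times Z$ (one vanishing on limit points, one bounded by $\widetilde C\e^{2}$), lets $\e\to0$ to obtain \emph{exact} exchangeability of the pair, and invokes de Finetti to write every limit point as $\int\n_{\r}\otimes\n_{\r}\,\l(d\r)$ before applying the variance estimate (\ref{eqst}). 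You stay in a single copy of $Z$, restrict to the finite set $B(o,r_{i})\sqcup F_{i}\s$, use $|F_{i}|$ parallel bridges instead of a single cross-swap at the origin, accept only \emph{approximate} exchangeability at fixed $\e$, and close with the elementary finite-exchangeability identity $\E(\overline\y_{o,i}-\overline\y_{\s o,i})^{2}=2(v-c)/|F_{i}|$ rather than de Finetti; this buys an explicit rate $|F_{i}|^{-1/2}$ and avoids the limit-point-of-limit-points step. Your explicit treatment of the second block as the right translate $F_{i}\s$ (in general neither a ball about $\s o$ nor connected) is a point where you are more careful than the paper, whose doubled-space formalism most naturally controls the left translate $\s F_{i}$ of the averaging window; your bridge construction is precisely the device that makes the right-translate version go through without any geometric assumption on $F_{i}\s$. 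Two bits of bookkeeping you should make explicit: (i) the number of generating transpositions needed to realize an arbitrary permutation of $B(o,r_{i})\sqcup F_{i}\s$ depends on $i$, so the cumulative total-variation error is $O_{i}(\e)$ rather than $O(\e)$ --- harmless, since $i$ is frozen when $\e\to0$ in the stated order of limits, but it must be said; and (ii) the passage from the Dirichlet bound on $Z_{m}$ to a total-variation bound for the marginal on the finite index set uses convexity (Jensen) followed by Cauchy--Schwarz, exactly as in the paper's proof of Lemma \ref{inclusion}, and survives the weak limit because the marginal lives on a finite space.
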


We introduce the following notions:
Denote by $\P(Z \times Z)$ the space of probability measures on $Z \times Z$.
For $\s \in \G$, define $\hat \s: Z \to Z\times Z$ by
$\hat \s(\y):=(\y, \s^{-1}\y)$.
For $\m \in \P(Z)$, denote by $\hat \s \m \in \P(Z \times Z)$ the push forward by $\hat \s$, i.e., $\hat \s \m:=\m \circ \hat \s^{-1}$.
We define the subset $\A_{\e, L}$ in $\P(Z \times Z)$ as the set of all limit points of
$\{\hat \s \m \ : \ L < |\s| \le \e \sqrt{t_{m}}, \m \in \P_{m,C}\}$
as $m \to \infty$,
and the subset $\A_{\e}$ in $\P(Z \times Z)$ as the set of all limit points of $\A_{\e, L}$
as $L \to \infty$.

Then we have that:

$$
\limsup_{L \to \infty}\limsup_{m \to \infty}\sup_{\s \in \G \text{s.t.} \\ L < |\s| \le \e \sqrt{t_{m}}}\sup_{\m \in \P_{m,C}}\int_{(\y,\y') \in Z \times Z}\left|\overline \y_{o,i}- \overline \y'_{o,i}\right|(\hat \s \m)(d\y d\y') 
$$
$$
\le \sup_{\m \in \A_{\e}}\int_{(\y, \y') \in Z \times Z}\left|\overline \y_{o,i}- \overline \y'_{o,i}\right|\m(d\y d\y').
$$

As in the proof of the one block estimate, we introduce a subgraph $\L$ in $X$,
by setting $V_{\L}:=B(o, K)$, $E_{\L}:=\{e \in E \ : \ oe, te \in V_{\L}\}$
and consider the generator $L_{\L}^{\circ}$ on $L^{2}(Z, \n)$ by
$$
L_{\L}^{\circ}=\frac{1}{2}\sum_{e \in E_{\L}}\pi_{e}.
$$
Then let us define the two generators on $L^{2}(Z\times Z, \n\otimes \n)$ and the corresponding Dirichlet forms.
First, we define the generator on $L^{2}(Z\times Z, \n\otimes \n)$ by $L_{\L}^{\circ}\otimes1 + 1\otimes L_{\L}^{\circ}$.
For $\m \in \P(Z\times Z)$, denote by $\m|_{\L \times \L}$ the restriction of $\m$ on $Z_{\L} \times Z_{\L}$, i.e., 
$\m|_{\L\times \L}(\y, \y'):=\m(\{(\widetilde\y, \widetilde\y') \ : \ \widetilde\y|_{\L}=\y, \widetilde\y'|_{\L}=\y' \})$
for $(\y, \y') \in Z_{\L} \times Z_{\L}$.
The corresponding Dirichlet form of $\sqrt{\f_{\L\times \L}}$, where $\f_{\L\times \L}:=d\m|_{\L\times \L}/d\n_{\L}\otimes\n_{\L}$ is defined by
$$
I_{\L \times \L}^{\circ}(\m):= - \int_{Z_{\L}\times Z_{\L}}\sqrt{\f_{\L\times \L}}(L_{\L}^{\circ}\otimes 1+1\otimes L_{\L}^{\circ})\sqrt{\f_{\L\times \L}}d\n_{\L} \otimes \n_{\L}.
$$
Second, we define the generator on $L^{2}(Z\times Z, \n\otimes \n)$ in the following way:
For $(x, y) \in V\times V$ and $(\y, \y') \in Z \times Z$, we construct a new configuration $(\y, \y')^{(x, y)} \in Z \times Z$ by setting $(\y'_{y}, \y_{x})$ on $(x, y)$, $(\y'_{x}, \y_{y})$ on $(y, x)$ and keeping unchanged otherwise.
For $F \in L^{2}(Z\times Z, \n\otimes \n)$, $(x, y) \in V \times V$,
define $\widetilde{\pi}_{x, y}F((\y, \y'))=F((\y, \y')^{(x, y)})-F((\y, \y'))$.
Define the generator $L_{o,o}^{\circ}$ on $L^{2}(Z\times Z, \n\otimes \n)$ by 
$$
L_{o, o}^{\circ}:=\widetilde{\pi}_{o, o},
$$
and the corresponding Dirichlet form of $\sqrt{\f_{\L\times \L}}$ by
$$
I_{\L \times \L}^{(o,o)}(\m):= - \int_{Z_{\L}\times Z_{\L}}\sqrt{\f_{\L\times \L}}L_{o,o}^{\circ}\sqrt{\f_{\L\times \L}}d\n_{\L}\otimes \n_{\L}.
$$

Then we use the following lemma. The proof appears in Lemma 4.2 in \cite{T}, so we omit it.

\begin{lem}\label{pathlemma}
For all $F \in L^{2}(Z_{m}, \n_{m})$ and all $\underline \s \in \G/\G_{m}$, it holds that
$$
\int_{Z_{m}}(\pi_{o,\underline \s o}F)^{2}d\n_{m} \le 4 d(o, \underline \s o)^{2}\sum_{e \in E^{0}}\int_{Z_{m}}(\pi_{e}F)^{2}d\n_{m}.
$$
\end{lem}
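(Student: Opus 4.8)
The plan is to prove this by the classical \emph{moving particle} argument: realise the long-range exchange of the occupation variables at $o$ and $\underline\s o$ as a telescoping product of nearest-neighbour exchanges along a geodesic, apply the Cauchy--Schwarz inequality, and then use the invariance of $\n_m$ both to discard the intermediate configurations and to fold every edge of the path back onto the fundamental domain $E^0$.

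First I would fix a geodesic in $X_m$ joining $o$ to $\underline\s o$. Choosing a shortest representative of the coset and writing it as a reduced word $\s=s_1 s_2\cdots s_n$ in $S$ with $n=d(o,\underline\s o)$, the successive vertices $x_0=o,x_1,\dots,x_n=\underline\s o$, where $x_k$ is the image of $s_1\cdots s_k$ under the covering map, are distinct and joined by edges $e_k=(x_{k-1},x_k)$; crucially each $e_k$ is the $\G/\G_m$-translate $\underline{x_{k-1}}\,e^0_{s_k}$ of the fundamental edge $e^0_{s_k}\in E^0$ carrying the generator $s_k$. Writing $\t_{e}$ for the involution of $Z_m$ that exchanges the two endpoints of an edge $e$, and $\t_{o,\underline\s o}$ for the exchange of the sites $o$ and $\underline\s o$, the combinatorial heart is the identity
$$
\t_{o,\underline\s o}=\t_{e_1}\t_{e_2}\cdots\t_{e_{n-1}}\t_{e_n}\t_{e_{n-1}}\cdots\t_{e_2}\t_{e_1},
$$
a product of $2n-1$ edge exchanges in which $e_n$ appears once and every other $e_k$ appears exactly twice.

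Telescoping $F(\t_{o,\underline\s o}\y)-F(\y)$ along this product expresses $\pi_{o,\underline\s o}F(\y)$ as a sum of $2n-1$ terms of the form $(\pi_{e_{k_j}}F)(\y^{(j-1)})$, where $\y^{(j-1)}$ is the configuration obtained after applying the first $j-1$ exchanges; Cauchy--Schwarz gives the pointwise bound $(\pi_{o,\underline\s o}F)^2\le(2n-1)\sum_{j=1}^{2n-1}\big((\pi_{e_{k_j}}F)(\y^{(j-1)})\big)^2$. Integrating against $\n_m$ and using that the $(1/2)$-Bernoulli measure is preserved by every site exchange, the measure-preserving changes of variables $\y\mapsto\y^{(j-1)}$ eliminate the intermediate configurations, so that
$$
\int_{Z_m}(\pi_{o,\underline\s o}F)^2 d\n_m\le(2n-1)\sum_{j=1}^{2n-1}\int_{Z_m}(\pi_{e_{k_j}}F)^2 d\n_m.
$$
Collecting multiplicities and using the $\G/\G_m$-invariance of $\n_m$ to replace each path edge $e_k=\underline{x_{k-1}}e^0_{s_k}$ by its representative $e^0_{s_k}\in E^0$, each of the $2n-1$ integrals is bounded by $\sum_{e\in E^0}\int_{Z_m}(\pi_e F)^2 d\n_m$; combined with the Cauchy--Schwarz prefactor $2n-1$ this yields the factor $(2n-1)^2\le 4n^2=4\,d(o,\underline\s o)^2$ and closes the estimate.

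I expect the last reduction onto $E^0$ to be the delicate step. The change of variables $\y\mapsto\underline{x_{k-1}}\y$ turns $\pi_{e_k}F$ into $\pi_{e^0_{s_k}}$ applied to the \emph{translate} $F\circ\underline{x_{k-1}}$ rather than to $F$ itself, so the clean bound by $\sum_{e\in E^0}\int(\pi_e F)^2 d\n_m$ is exactly what the $\G/\G_m$-symmetry of $\n_m$ (together with the invariance of the densities to which the lemma is applied) is there to supply. Arranging the bookkeeping of edge multiplicities and this invariance so that they produce precisely the constant $4$ and the exponent $2$ on $d(o,\underline\s o)$ is where the care lies; the rest is a routine telescoping-and-Cauchy--Schwarz computation.
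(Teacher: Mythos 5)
The paper itself does not prove this lemma: it refers to Lemma 4.2 of \cite{T}, and the moving-particle argument you describe (decompose the long-range exchange along a geodesic as a product of $2n-1$ nearest-neighbour exchanges, telescope, apply Cauchy--Schwarz with prefactor $2n-1$, and remove the intermediate configurations by measure-preserving changes of variables) is exactly that argument; your bookkeeping of the constant, $(2n-1)^2\le 4n^2=4d(o,\underline\s o)^2$, is also correct. So the strategy is the intended one.

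However, the step you flag as ``delicate'' is not mere bookkeeping: it is a genuine obstruction for the lemma as literally stated. The change of variables gives $\int_{Z_m}(\pi_{e_k}F)^2d\n_m=\int_{Z_m}\bigl(\pi_{e^0_{s_k}}(F\circ\underline{x_{k-1}})\bigr)^2d\n_m$, and this equals $\int_{Z_m}(\pi_{e^0_{s_k}}F)^2d\n_m$ only when $F$ is $\G/\G_m$-invariant; without that hypothesis the asserted inequality is false. Indeed, take $F(\y)=\y_{\underline\s o}$ with $d(o,\underline\s o)\ge 2$: every $e\in E^0$ touches only $o$ and its neighbours, so $\pi_eF\equiv 0$ and the right-hand side vanishes, while $\int_{Z_m}(\pi_{o,\underline\s o}F)^2d\n_m=\E_{\n_m}\bigl[(\y_o-\y_{\underline\s o})^2\bigr]=1/2$. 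Consequently your proof cannot be closed for arbitrary $F\in L^2(Z_m,\n_m)$; it does close, with exactly the stated constant, once one adds the hypothesis that $F$ is $\G/\G_m$-invariant. That hypothesis is satisfied in the only application (the proof of Lemma \ref{inclusion}, where $F=\sqrt{d\m/d\n_m}$ with both $\m\in\P_{m,C}$ and $\n_m$ invariant), so the defect lies in the statement rather than in your approach --- but you should make the invariance assumption explicit rather than leaving it as a remark about ``the densities to which the lemma is applied.''
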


For any constant $\widetilde C >0$,
let
$$
\A_{\e, \widetilde C}:=\{\m \in \P(Z \times Z) \ : \ I_{\L\times \L}^{\circ}(\m)=0, I_{\L\times \L}^{(o,o)}(\m) \le \widetilde C \e^{2}\}.
$$
Then we have the following lemma.

\begin{lem}\label{inclusion}
There exists a constant $\widetilde C>0$ such that $\A_{\e} \subset A_{\e, \widetilde C}$.
\end{lem}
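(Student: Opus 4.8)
The plan is to verify, for a fixed block $\L = B(o, K)$, the two defining conditions of $\A_{\e, \widetilde C}$ for every limit point $\m \in \A_{\e}$. I would first establish quantitative Dirichlet form bounds along the approximating family $\hat \s \m'$ with $\m' \in \P_{m, C}$ and $L < |\s|_{\G} \le \e \sqrt{t_{m}}$, and then pass to the limit using the lower semicontinuity of $I_{\L \times \L}^{\circ}$ and $I_{\L \times \L}^{(o,o)}$ under weak convergence. The observation that makes the restricted forms well defined is that, once $L > 2K$, the two balls $B(o, K)$ and $B(\s o, K)$ are disjoint, so the restriction of $\hat \s \m'$ to $Z_{\L} \times Z_{\L}$ is the joint law under $\m'$ of the two disjoint coordinate blocks $\{\y_{x}\}_{x \in B(o, K)}$ and $\{\y_{y}\}_{y \in B(\s o, K)}$; this is a genuine probability density on the finite product space, even though $\hat \s \m'$ itself is carried by the diagonal of $Z \times Z$.

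For the first condition I would argue exactly as in the proof of Theorem \ref{1block}. The generator $L_{\L}^{\circ} \otimes 1 + 1 \otimes L_{\L}^{\circ}$ performs nearest neighbour exchanges separately inside $B(o, K)$ in the first copy and inside $B(\s o, K)$ in the second copy, which in the single configuration $\y$ are exchanges across the edges of $B(o, K)$ and of $B(\s o, K)$. Applying the convexity estimate of the one block proof to each of the two blocks and using the $\G / \G_{m}$ invariance of $\m'$ and $\n_{m}$ together with non-degeneracy $c(e, \y) \ge c_{0}$, I expect the bound
$$
I_{\L \times \L}^{\circ}(\hat \s \m') \le \frac{2 |B_{\G}(K)|}{c_{0} [\G : \G_{m}]} I_{m}(\m') \le \frac{2 |B_{\G}(K)| C}{c_{0} t_{m}} \to 0 \quad (m \to \infty),
$$
so that lower semicontinuity forces $I_{\L \times \L}^{\circ}(\m) = 0$; since $K$ is arbitrary this holds for every $\L$.

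For the second condition the key point is the identification of the origin exchange in the two copy picture with a single long jump: the operator $\widetilde \pi_{o, o}$ swaps the origin value $\y_{o}$ of the first copy with the origin value $(\s^{-1} \y)_{o} = \y_{\s o}$ of the second copy, so on $Z_{m}$ it is exactly the exchange $\pi_{o, \underline \s o}$. Hence by convexity (marginalising the configuration outside $B(o, K) \cup B(\s o, K)$),
$$
I_{\L \times \L}^{(o,o)}(\hat \s \m') \le \frac{1}{2} \int_{Z_{m}} \left( \pi_{o, \underline \s o} \sqrt{\f} \right)^{2} d\n_{m}.
$$
I would then feed this into Lemma \ref{pathlemma} with $F = \sqrt{\f}$ and convert $\sum_{e \in E^{0}} \int (\pi_{e} \sqrt{\f})^{2} d\n_{m}$ into $I_{m}(\m')$ via (\ref{eq}) and non-degeneracy, obtaining
$$
I_{\L \times \L}^{(o,o)}(\hat \s \m') \le \frac{4 \, d(o, \underline \s o)^{2}}{c_{0} [\G : \G_{m}]} I_{m}(\m') \le \frac{4 \e^{2} t_{m}}{c_{0} [\G : \G_{m}]} \cdot \frac{C [\G : \G_{m}]}{t_{m}} = \frac{4C}{c_{0}} \e^{2},
$$
where I used $d(o, \underline \s o) \le |\s|_{\G} \le \e \sqrt{t_{m}}$ and $I_{m}(\m') \le C [\G : \G_{m}] / t_{m}$. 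Setting $\widetilde C := 4C / c_{0}$ and passing to the limit by lower semicontinuity yields $I_{\L \times \L}^{(o,o)}(\m) \le \widetilde C \e^{2}$, establishing both required bounds.

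The step I expect to be the main obstacle is the passage between the product space Dirichlet forms and the single configuration quantity $I_{m}(\m')$: one must check that restricting to the disjoint finite blocks genuinely turns the diagonally supported measure $\hat \s \m'$ into an absolutely continuous law to which convexity and Lemma \ref{pathlemma} apply, and that the origin exchange really coincides with $\pi_{o, \underline \s o}$ on these blocks. It is precisely here that the quadratic factor $d(o, \underline \s o)^{2}$ in Lemma \ref{pathlemma}, combined with the cutoff $|\s|_{\G} \le \e \sqrt{t_{m}}$ and the bound $I_{m}(\m') \lesssim [\G : \G_{m}] / t_{m}$, produces the clean $\e^{2}$ scaling rather than a weaker estimate.
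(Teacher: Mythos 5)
Your proposal is correct and follows essentially the same route as the paper: the identification $(\hat \s \m)|_{\L\times\L} = \m|_{\L\cup\s\L}$ for disjoint blocks, the convexity bound reducing $I^{\circ}_{\L\times\L}$ to $I_m(\m)/t_m \to 0$, and the combination of Lemma \ref{pathlemma} with (\ref{eq}) and the cutoff $|\s|_{\G}\le\e\sqrt{t_m}$ to get $I^{(o,o)}_{\L\times\L}\le (4C/c_0)\e^2$, with the same constant $\widetilde C = 4C/c_0$. The only cosmetic difference is that the paper invokes continuity of the restricted Dirichlet forms (which are functions of finitely many coordinates of $\m|_{\L\times\L}$) where you invoke lower semicontinuity; either suffices.
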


\begin{proof}
Recall that a subgraph $\L=(V_{\L}, E_{\L})$ of $X$ is defined by
$V_{\L}:=B(o, K)$ and $E_{\L}:=\{e \in E \ : \ oe, te \in V_{\L}\}$.
Here we regard $X_{m}$ as a suitable fundamental domain in $X$ by the $\G_{m}$-action.
Take large enough $L$, and $m$ for the diameter of $\L$ so that for all $\s$ with $L < |\s|$, $V_{\L}\cap \s V_{\L} = \emptyset$,
for all $\s$ with $L < |\s| \le \e \sqrt{t_{m}}$,
$V_{\L}\cap \s V_{\L} \subset X_{m}$.
Notice that $\sqrt{t_{m}} \le 2diamX_m$.

For given $(\y, \y') \in Z_{\L}\times Z_{\L}$,
we define $\widetilde \y \in Z_{\L \cup \s \L}$ by $\widetilde \y|_{\L}=\y, \s^{-1}\widetilde \y|_{\L}=\y'$.
Then $(\hat \s \m)|_{\L \times \L}(\y, \y')=\m|_{\L \cup \s \L}(\widetilde \y)$.
By using the generator $L_{\L\cup \s \L}^{\circ}$ on $L^{2}(Z, \n)$,
we have that
$I_{\L \times \L}^{\circ}(\hat \s \m)=I_{\L \cup \s \L}^{\circ}(\m)$.
For every $\m \in \P_{m, C}$ as in the previous section by using (\ref{eq}), and by the convexity of the Dirichlet form and the $(\G/\G_{m})$-invariance of $\m$ and $\n_{m}$,
we have that
$$
I_{\L\times \L}^{\circ}(\hat \s \m)=I_{\L \cup \s \L}^{\circ}(\m) \le \frac{|B_{\G}(K)|}{2c_{o}[\G:\G_{m}]}I_{m}(\m)\le C\frac{|B_{\G}(K)|}{2c_{0}t_{m}} \to 0,
$$
as $m \to \infty$.
Therefore $I_{\L \times \L}^{\circ}(\m_{L})=0$ for all $\m_{L} \in \A_{\e, L}$, and $I_{\L\times \L}^{\circ}(\widetilde \m)=0$ 
for all $\widetilde \m \in \A_{\e}$
by the continuity of $I_{\L \times \L}^{\circ}$.
Furthermore, by the convexity of the Dirichlet form,
$$
I_{\L\times \L}^{(o,o)}(\hat \s \m)=\frac{1}{2}\int_{Z_{\L\cup\s \L}}\left(\pi_{o,\underline \s o}\sqrt{\frac{d\m|_{\L\cup\s \L}}{d\n_{\L \cup \s \L}}}\right)^{2}d\n_{\L\cup\s \L} \le \frac{1}{2}\int_{Z_{m}}\left(\pi_{o,\underline \s o}\sqrt{\frac{d\m}{d\n_{m}}}\right)^{2}d\n_{m}.
$$
By Lemma \ref{pathlemma}, the $(\G/\G_{m})$-invariance of $\m \in \P_{m, C}$ and (\ref{eq}),
\begin{align*}
\int_{Z_{m}}\left(\pi_{o, \underline \s o}\sqrt{\frac{d\m}{d\n_{m}}}\right)^{2}d\n_{m} &\le
4d(o, \underline \s o)^{2}\sum_{e \in E^{0}}\int_{Z_{m}}\left(\pi_{e}\sqrt{\frac{d\m}{d\n_{m}}}\right)^{2}d\n_{m} \\
&\le 4d(o,\underline \s o)^{2}\frac{2}{c_{0}[\G:\G_{m}]}I_{m}(\m).
\end{align*}

Since $d(o, \underline \s o)\le |\s|_{\G}$, we have that for $\s \in \G$ with $L < |\s| \le \e \sqrt{t_{m}}$
$$
I_{\L \times \L}^{(o,o)}(\hat \s \m)\le \frac{1}{2}\cdot 4(\e \sqrt{t_{m}})^{2}\frac{2}{[\G:\G_{m}]}\frac{C}{c_{0}}\frac{[\G:\G_{m}]}{t_{m}} = \frac{4C}{c_{0}}\e^{2}.
$$
Define $\widetilde C:=4C/c_{0}$. 
By the continuity of $I_{\L \times \L}^{(o,o)}$, we have that $I_{\L \times \L}^{(o, o)}(\widetilde \m) \le \widetilde C \e^{2}$ for all $\widetilde \m \in \A_{\e}$.
This implies that $\A_{\e} \subset \A_{\e, \widetilde C}$.
\end{proof}

\begin{proof}[Proof of Theorem \ref{2blocks}]
Let $\A_{0}$ be the set of all limit points of $\A_{\e, \widetilde C}$ as $\e \to 0$.
It holds that
$I_{\L \times \L}^{\circ}(\widetilde \m_{0})=0$
and $I_{\L \times \L}^{(o, o)}(\widetilde \m_{0})=0$ for all $\widetilde \m_{0} \in \A_{0}$ on each $\L$.
This shows that the random variables $\{(\y_{x}, \y'_{y})\}_{x, y \in V}$ are exchangeable on $Z \times Z$ under $\tilde \m_{0}$.
By the de Finetti theorem,
there exists a probability measure $\l$ on $[0,1]$ such that
$$
\widetilde \m_{0}=\int_{[0,1]}\n_{\r}\otimes \n_{\r}\l(d\r).
$$
In the proof of Theorem \ref{1block} (\ref{eqst}), we have
$$
\lim_{i \to \infty}\sup_{\r \in [0,1]}\E_{\n_{\r}}\left|\overline \y_{o, i} - \r\right|^{2}=0.
$$
Then 
$$
\sup_{\widetilde \m_{0} \in \A_{0}}\E_{\widetilde \m_{0}} \left|\overline \y_{o, i} - \overline \y'_{o, i}\right|
\le \sup_{\r \in [0,1]}\E_{\n_{\r}\otimes \n_{\r}}\left|\overline \y_{o, i} - \overline \y'_{o, i}\right|
\le 2 \sup_{\r \in [0,1]}\E_{\n_{\r}}\left|\overline \y_{o, i} - \r \right| \to 0,
$$
as $i \to \infty$.
Here we used the triangular inequality in the last inequality.
By Lemma \ref{inclusion}, we have that $\A_{\e} \subset \A_{\e, \widetilde C}$ for some $\widetilde C >0$,
\begin{align*}
&\limsup_{\e \to 0}\limsup_{L \to \infty}\limsup_{m \to \infty}\sup_{\s \in \G \text{s.t.} L < |\s| \le \e\sqrt{t_{m}}}\sup_{\m \in \P_{m, C}}\E_{\m}\left|\overline \y_{o, i} - \overline \y_{\underline \s o, i}\right|  \\
&\le \limsup_{\e \to 0}\sup_{\widetilde \m \in \A_{\e}}\E_{\widetilde \m}\left|\overline \y_{o, i} - \overline \y'_{o, i}\right| \\
&\le \limsup_{\e \to 0}\sup_{\widetilde \m \in \A_{\e, \widetilde C}}\E_{\widetilde \m}\left|\overline \y_{o, i} - \overline \y'_{o, i}\right|	\\
&\le \sup_{\widetilde \m_{0} \in \A_{0}}\E_{\widetilde \m_{0}}\left|\overline \y_{o, i} - \overline \y'_{o, i}\right| \to 0,
\end{align*}
as $i \to \infty$.
This proves the theorem.
\end{proof}

\begin{proof}[Proof of Theorem \ref{equilibrium}]
For $\y \in Z$, the following uniform estimates hold:
\begin{equation}\label{eq1}
\left| \overline \y_{o, b(\e \sqrt{t_{m}})} - \frac{1}{\left| F_{b(\e \sqrt{t_{m}})}\right|}\sum_{\s \in F_{b(\e\sqrt{t_{m}})}}\overline \y_{\s o, i} \right|
\le C(F_i)\frac{\left|\partial_SF_{b(\e \sqrt{t_{m}})}\right|}{\left|F_{b(\e \sqrt{t_{m}})}\right|},
\end{equation}
where $C(F_i)$ is a constant depending only on $F_i$, and

\begin{equation}\label{eq2}
\left|\frac{1}{\left| F_{b(\e \sqrt{t_{m}})}\right|}\sum_{\s \in F_{b(\e\sqrt{t_{m}})}}\overline \y_{\s o, i} -\frac{1}{\left| F_{b(\e \sqrt{t_{m}})}\right|}\sum_{\s \in F_{b(\e\sqrt{t_{m}})}\setminus B_{\G}(L)}\overline \y_{\s o, i} \right| 
\le \frac{\left| B_{\G}(L)\right|}{\left| F_{b(\e \sqrt{t_{m}})}\right|}.
\end{equation}
By the triangular inequality, (\ref{eq1}) and (\ref{eq2}), we have that

\begin{align*}
&\left| \overline \y_{o, b(\e \sqrt{t_{m}})} - \frac{1}{\left| F_{b(\e \sqrt{t_{m}})}\setminus B_{\G}(L)\right|}
\sum_{\s \in F_{b(\e\sqrt{t_{m}})}\setminus B_{\G}(L)}\overline \y_{\s o, i} \right|  
\le C(F_i)\frac{\left|\partial_SF_{b(\e \sqrt{t_{m}})}\right|}{\left|F_{b(\e \sqrt{t_{m}})}\right|}
+2\frac{\left| B_{\G}(L)\right|}{\left| F_{b(\e \sqrt{t_{m}})}\right|},
\end{align*}
where 
the last terms go to $0$
as $m \to \infty$,
since a F{\o}lner sequence satisfies that $|\partial_{S} F_{b(\e\sqrt{t_{m}})}|/|F_{b(\e\sqrt{t_{m}})}| \to 0$ as $m \to \infty$.

For every $\m \in \P_{m,C}$, we have that
$$
\E_{\m}\left|\overline \y_{o,i} - \overline \y_{o, b(\e\sqrt{t_{m}})}\right|
\le \E_{\m}\left| \overline \y_{o, i} - \frac{1}{\left| F_{b(\e \sqrt{t_{m}})}\setminus B_{\G}(L)\right|}
\sum_{\s \in F_{b(\e\sqrt{t_{m}})}\setminus B_{\G}(L)}\overline \y_{\s o, i} \right|  + o_{m},
$$
where $o_m \to 0$ as $m \to \infty$.
The last term is bounded by
\begin{align*}
\frac{1}{\left| F_{b(\e \sqrt{t_{m}})}\setminus B_{\G}(L)\right|}\sum_{\s \in F_{b(\e \sqrt{t_{m}})}\setminus B_{\G}(L)}\E_{\m}\left|\overline \y_{o,i} - \overline \y_{\s o, i}\right| 
&\le \sup_{\s \in F_{b(\e \sqrt{t_{m}})}\setminus B_{\G}(L)}\E_{\m}\left|\overline \y_{o,i} - \overline \y_{\s o, i}\right|  \\
&\le \sup_{\s \in \G \text{s.t.} L < |\s|_{\G} \le \e \sqrt{t_{m}}}\sup_{\m \in \P_{m,C}}\E_{\m}\left|\overline \y_{o,i} - \overline \y_{\s o, i}\right|.
\end{align*}
By Theorem \ref{2blocks}, it holds that
$$
\lim_{i \to \infty}\limsup_{\e \to 0}\limsup_{m \to \infty}\sup_{\m \in \P_{m, C}}\E_{\m}\left|\overline \y_{o,i} - \overline \y_{o, b(\e\sqrt{t_{m}})}\right|=0.
$$
Since for every $\G$-periodic local function bundle $f: V \times Z \to \R$,
the function $\langle f_{o}\rangle(\cdot)$ is uniformly continuous on $[0,1]$,
$$
\lim_{i \to \infty}\limsup_{\e \to 0}\limsup_{m \to \infty}\sup_{\m \in \P_{m, C}}\E_{\m}\left|\langle f_{o}\rangle (\overline \y_{o,i}) - \langle f_{o}\rangle (\overline \y_{o, b(\e\sqrt{t_{m}})})\right|=0.
$$
By Theorem \ref{1block} and the triangular inequality, we conclude that
$$
\lim_{i \to \infty}\limsup_{\e \to 0}\limsup_{m \to \infty}\sup_{\m \in \P_{m, C}}\E_{\m}\left|\overline f_{o,i} - \langle f_{o}\rangle (\overline \y_{o, b(\e\sqrt{t_{m}})})\right|=0,
$$
and this completes the proof.
\end{proof}

\textbf{Acknowledgements.}
The author thanks Hiroshi Kawabi for helpful comments and encouragement, Motoko Kotani for support during this work and an anonymous referee for useful comments to improve this note.


\begin{thebibliography}{99}
	\bibitem{BGN}Bartholdi, L., Grigorchuk, R, Nekrashevych, V., From fractal groups to fractal sets. Fractals in Graz 2001, 25-118, Trends Math., Birkh\"auser, Basel, 2003.
	\bibitem{BV}Bartholdi, L., Vir\'{a}g, B.: Amenability via random walks. Duke Math. J. {\bf 130}, no. 1, 39-56 (2005)
	\bibitem{CSC}Ceccherini-Silberstein, T., Coornaert, M.: Cellular Automata and Groups. Springer Monographs in Mathematics, Springer-Verlag, Berlin, 2010.
	\bibitem{FHU}Funaki, T., Handa, K., Uchiyama, K.: Hydrodynamic limit of one-dimensional exclusion processes with speed change, Ann. Probab., {\bf 19}, no. 1, 245-265 (1991)
	\bibitem{GPV}Guo, M. Z., Papanicolaou, G. C., Varadhan, S. R. S.: Nonlinear diffusion limit for a system with nearest neighbor interactions. Comm. Math. Phys. {\bf 118}, no. 1, 31-59 (1988)
	\bibitem{J}Jara, M.: Hydrodynamic limit for a zero-range process in the Sierpinski gasket. Comm. Math. Phys. {\bf 288}, no.2, 773-797 (2009)
	\bibitem{K}Kaimanovich, V. A., Random walks on Sierpi\'nski graphs: hyperbolicity and stochastic homogenization, Fractals in Graz 2001, 145-183, Trends Math., Birkh\"auser, Basel, 2003.
	\bibitem{KL}Kipnis, C., Landim, C.: Scaling limits of interacting particle systems. Grundlehren der Mathematischen Wissenschaften {\bf 320}, Springer-Verlag, Berlin, 1999.
	\bibitem{KLO}Kipnis, C., Landim, C., Olla, S.: Hydrodynamical limit for a nongradient system: the generalized symmetric exclusion process. Comm. Pure Appl. Math., {\bf 47}, no.11, 1475-1545 (1994)
	\bibitem{KOV}Kipnis, C., Olla, S., Varadhan, S. R. S.: Hydrodynamics and large deviation for simple exclusion processes. Comm. Pure Appl. Math., {\bf 42}, no.2, 115-137 (1989)
	\bibitem{KS01}Kotani, M., Sunada, T.: Albanese maps and off diagonal long time asymptotics for the heat kernel. Comm. Math. Phys. {\bf 209}, no.3, 633-670 (2000)
	\bibitem{N}Nekrashevych, V.: Self-Similar Groups. Mathematical Surveys and Monographs {\bf 117}, American Mathematical Society, Providence, RI, 2005.
	\bibitem{OS}Olla, S., Sasada, M.: Macroscopic energy diffusion for a chain of anharmonic oscillators. Probab. Theory Related Fields {\bf 157}, no.3-4, 721-775 (2013)
	\bibitem{Sa}Sasada, M.: Spectral gap for stochastic energy exchange model with nonuniformly positive rate function. Ann. Probab., {\bf 43}, no. 4, 1663-1711 (2015)
	\bibitem{SK}Sasada, M., Kametani, Y.,: A new technique for the computation of central limit theorem variances for exclusion processes and its application, Talk at Research Institute for Mathematical Sciences, Kyoto University, October 2015.
	\bibitem{S}Spohn, H.: Large Scale Dynamics of Interacting Particles, Texts and Monograph in Physics, Springer-Verlag, Heidelberg, 1991.
	\bibitem{T}Tanaka, R.: Hydrodynamic limit for weakly asymmetric simple exclusion processes in crystal lattices. Comm. Math. Phys. {\bf 315}, no.3, 603-641 (2012)

\end{thebibliography}
\end{document}